\def\ba{\mathbf A}
\def\A{{\mathbf A}}
\def\C{{\mathbb C}}
\def\N{{\mathbf N}}
\def\G{{\mathbf G}}
\def\Jac{\mathrm{Jac}}
 \newtheorem*{theorem*}{Theorem}
  \theoremstyle{plain}
\theoremstyle{plain}
\newtheorem{theorem}{Theorem}[section]
  \theoremstyle{definition}
  \newtheorem{notation}[theorem]{Notation}   
  \theoremstyle{plain}
  \newtheorem{proposition}[theorem]{Proposition}
  \theoremstyle{plain}
  \newtheorem{lemma}[theorem]{Lemma}
   \theoremstyle{definition}
   \newtheorem{remark}[theorem]{Remark}
\begin{document}
\author{Adrien Dubouloz, Lucy Moser-Jauslin and Pierre-Marie Poloni} 
\address{Adrien Dubouloz and Lucy Moser-Jauslin\\ CNRS\\ Institut de Math\'{e}matiques de Bourgogne\\ Universit\'{e} de Bourgogne\\ 9 Avenue Alain Savary\\ BP 47870\\ 21078 Dijon Cedex\\ France} \email{Adrien.Dubouloz@u-bourgogne.fr} \email{moser@u-bourgogne.fr}
\address{ Pierre-Marie Poloni\\ Mathematisches Institut Universit\"at Basel Rheinsprung 21 CH-4051 Basel Switzerland\\}
\email{pierre-marie.poloni@unibas.ch}
\thanks{This research was supported in part by the ANR Grant BirPol ANR-11-JS01-004-01.}
\keywords{Automorphisms; exotic structures; contractible complex threefolds; extensions of automorphisms; Makar-Limanov invariant. Subject classification: 14L30;  13R20.}

\title{Automorphism groups of certain rational hypersurfaces in complex four-space}

\begin{abstract}
The Russell cubic is a smooth contractible affine complex threefold which is not isomorphic to affine three-space. In previous articles,  we discussed the structure of the automorphism group of this variety. Here we review some consequences of this structure and generalize some results to other hypersurfaces which arise as deformations of Koras-Russell threefolds.
\end{abstract}
\maketitle

 \section{Introduction}

In order to prove the linearizability of algebraic actions of $\C^*$ on affine three-space, \cite{K-R,K-K-ML-R}, Koras and Russell studied hyperbolic $\C^*$-actions on more general smooth contractible threefolds. This led them to  introduce a set of threefolds which are smooth affine and contractible, however not isomorphic to $\ba^3$. These varieties are now known as Koras-Russell threefolds.  One of the families of these varieties, called Koras-Russell threefolds of the first kind, is given by hypersurfaces $X_{d,k,\ell}$ in the affine space $\mathbb{A}^4=\mathrm{Spec}(\mathbb{C}[x,y,z,t])$ defined by equations of the form  $x^dy+z^k+t^\ell+x=0$ where $d\geq 2$ and $2\le k<\ell$ with $k$ and $\ell$ relatively prime. All of these threefolds admit algebraic actions of the complex additive group $\mathbb{G}_a$ and they were originally proven to be not isomorphic to affine space by means of invariants associated to those actions. These invariants, known as the Derksen and Makar-Limanov invariants, are defined respectively for an affine variety $X=\mathrm{Spec}(A)$ admitting non trivial $\mathbb{G}_a$-actions as the sub-algebra $\mathrm{Dk}(X)$ of $A$ consisting of regular functions invariant under \emph{at least} one non trivial $\mathbb{G}_a$-action on $X$ and its sub-algebra $\mathrm{ML}(X)$ consisting of regular functions invariants under \emph{all} non trivial such actions.

These tools have since become important and useful to study affine algebraic varieties. In particular, one of the central elements in the proofs of many existing results concerning Koras-Russell threefolds of the first kind, and some of the generalizations we consider in this article, is the fact that their Makar-Limanov and Derksen invariants are equal to $\mathbb{C}[x]$ and $\mathbb{C}[x,z,t]$ respectively (see, for example, \cite{K-ML1997}, Lemma 8.3 and \cite{K-ML2}, Example 9.1.  for the Koras-Russell threefolds).  This property imposes very strong restrictions on the nature of isomorphisms between such varieties which enable sometimes an explicit description of their isomorphism classes and automorphism groups.

Koras-Russell threefolds of the first kind belong to the more general
family of hypersurfaces $X=X(d,r_{0},g)$ in $\mathbb{A}^{4}=\mathrm{Spec}(\mathbb{C}[x,y,z,t])$
defined by equations of the form
\[
x^{d}y+r_{0}(z,t)+xg(x,z,t)=0
\]
where $d\geq2$, $r_{0}\in\mathbb{C}[z,t]$ and $g\in\mathbb{C}[x,z,t]$.
All varieties of this type share the property that they come equipped with a flat
$\mathbb{A}^{2}$-fibration $\pi=\mathrm{pr}_{x}:X\rightarrow\mathbb{A}^{1}=\mathrm{Spec}(\mathbb{C}[x])$
restricting to a trivial bundle over the complement of the origin
and with degenerate fiber $\pi^{-1}(0)$ isomorphic to the cylinder
$C_{0}\times\mathbb{A}^{1}$ over the plane curve $C_{0}\subset\mathrm{Spec}(\mathbb{C}[z,t])$
with equation $r_{0}=0$. In particular, noting that $\pi^{-1}(\A^1\setminus\{0\})$ is factorial, and that $\pi^{-1}(0)=\mathrm{div}(x)$ is a prime principal divisor if and only if $C_0$ is reduced and irreducible, we see that a threefold $X$ is factorial
whenever the corresponding curve $C_{0}$ is reduced and irreducible (see also \cite{Nag}). 
A combination of \cite{Kal} and \cite{Sath} implies that $X$ is isomorphic to $\mathbb{A}^3$
if and only if $\pi^{-1}(0)$ is reduced and isomorphic to $\mathbb{A}^2$, whence, by virtue of \cite{AEH}
if and only if $C_0$ is isomorphic to the affine line. 
Furthermore, identifying the coordinate ring $A$ of $X$ with the
sub-algebra $\mathbb{C}[x,z,t,x^{-d}(r_{0}+xg(x,z,t))]$ of $\mathbb{C}[x,z,t]_{x}$
via the canonical localization homomorphism with respect to $x$ gives
rise to a description of $X$ as the affine modification $\sigma=\mathrm{pr}_{x,z,t}\mid_{X}:X\rightarrow\mathbb{A}^{3}$
of $\mathbb{A}^{3}=\mathrm{Spec}(\mathbb{C}[x,z,t])$ with center
at the closed subscheme $Z$ with defining ideal $J=(x^{d},r_{0}(z,t)+xg(x,z,t))$
and divisor $D=\{x^{d}=0\}$ in the sense of \cite{K-Z99}. That is, $X$ is isomorphic to the complement of
the proper transform of $D$ in the blow-up of $\mathbb{A}^{3}$ with
center at $Z$.
Noting that the closed subscheme $Z$ of $\mathbb{A}^{3}$ is supported along the curve $C_{0}\subset\mathrm{Spec}(\mathbb{C}[z,t])$, this description implies that a smooth $X$ for which $C_{0}$ is irreducible, topologically contractible, but not isomorphic to the affine line, is an exotic $\mathbb{A}^{3}$ \cite[Theorem 3.1]{K-Z99}. This holds for instance for smooth deformations
of Koras-Russell threefolds of the first kind defined by equations
of the form $x^{d}y+z^{k}+t^{\ell}+xg(x,y,z)=0$, with $k,\ell\geq2$
relatively prime and $g(0,0,0)=1$, corresponding to the irreducible,
singular, topologically contractible plane curves $C_{0}=\{z^{k}+t^{\ell}=0\}$.

The present article reviews three complementary  applications of Derksen and Makar-Limanov invariants to the study
of threefolds $X(d,r_0,g)$ as above. First we summarize
several properties of automorphism groups of Koras-Russell threefolds
of the first kind which appeared separately in previous articles
by the authors, and we complete the picture
with a characterization of certain natural subgroups of these automorphism
groups. Then we turn to the study of
non-necessarily smooth
deformations
of Koras-Russell threefolds defined by equations of the form $x^{d}y+z^{k}+t^{\ell}+xg(x,y,z)=0$.
We explain how to obtain a description of isomorphism classes of these
threefolds that is reminiscent of the (mini)-versal deformation of
the corresponding singular plane curve $C_{0}=\{z^{k}+t^{\ell}=0\}.$
Finally, we illustrate on an example of a threefold $X=\{x^{d}y+r_{0}(z,t)=0\}$
with non-connected associated plane curve $C_{0}=\{r_{0}=0\}$ a general
procedure to construct new types of automorphisms of $X$ which do not admit
any extension to automorphisms of the ambient space $\mathbb{A}^{4}$.

\section{A preliminary observation}

Let $d\in\N$ and $r_0\in\C[z,t]$ be fixed. For any $g\in\C[x,z,t]$, we denote by $$J_g=(x^d,r_0+xg)$$ the ideal of $\C[x,z,t]$ generated by $x^d$ and $r_0+xg$, and  by $A(g)$ the coordinate ring of the hypersurface $X(g)$ of $\ba^4=\mathrm{Spec}(\C[x,y,z,t])$ defined by the equation $$x^dy+r_0(z,t)+xg(x,z,t)=0.$$ Corresponding to the presentation of $X(g)$ as the affine modification $\sigma : X(g)\rightarrow \mathbb{A}^3$ mentioned in the introduction, we have a chain of inclusions $$\C[x,z,t]\subset A(g)\subset A(g)[x^{-1}]\simeq \C[x,x^{-1},z,t].$$  The second inclusion is induced by the localization homomorphism with respect to the regular element $x\in A(g)$,  identifying $y\in A(g)$ with $-x^{-d}(r_0+xg)\in \C[x,x^{-1},z,t]$.

Given a pair of polynomials $f,g\in \C[x,z,t]$, the universal property of affine modifications \cite[Proposition 2.1]{K-Z99} implies that every automorphism $\varphi$ of $\C[x,z,t]$ which fixes the ideal $(x)$ and maps $J_g$ into $J_f$ lifts in a unique way to a morphism $\tilde{\varphi}:A(g)\rightarrow A(f)$ restricting to $\varphi$ on the subring $\C[x,z,t]$. Actually, $\tilde{\varphi}$ is even an isomorphism. Indeed, by hypothesis, there exist $\alpha\in\C^*$ and $a,b\in\C[x,z,t]$ such that $\varphi(x)=\alpha x$ and  $\varphi(r_0+xg)=ax^d+b(r_0+xf)$. The second equation implies that $b$ is congruent modulo $x$ to a non-zero constant whence that its residue class in $\C[x,z,t]/(x^d)$ is a unit for every $d\geq 1$. Choosing $b'\in\C[x,z,t]$ such that $bb'\equiv 1 \mod (x^d)$ and multiplying the previous equation by it, we conclude that $r_0+xf\in\varphi(J_g)$. Thus $\varphi$ maps $J_g$ isomorphically onto $J_f$.

The following lemma will be used several times throughout this article.

\begin{lemma} \label{rem} With the notation above assume further that the Derksen and Makar-Limanov invariants of $X(f)$ and $X(g)$ are  equal to $\C[x,z,t]$ and $\C[x]$, respectively. Then the previous construction provides a one-to-one correspondance between isomorphisms from  $A(g)$ to $A(f)$ and automorphisms of $\C[x,z,t]$ which fix the ideal $(x)$ and map $J_g$ into $J_f$.
\end{lemma}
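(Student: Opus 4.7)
The paragraph preceding the lemma already constructs one direction of the correspondence: starting with an automorphism $\varphi$ of $\C[x,z,t]$ which fixes $(x)$ and sends $J_g$ into $J_f$, one obtains an isomorphism $\tilde\varphi:A(g)\to A(f)$ restricting to $\varphi$ on $\C[x,z,t]$. This assignment is injective, since the lift is determined by its restriction, so it remains to prove surjectivity: every isomorphism $\Phi:A(g)\to A(f)$ is the lift of such a $\varphi$. Because the Derksen and Makar-Limanov invariants are intrinsic to the ring, the hypothesis forces $\Phi(\C[x,z,t])=\C[x,z,t]$ and $\Phi(\C[x])=\C[x]$, whence $\varphi:=\Phi|_{\C[x,z,t]}$ is an automorphism of $\C[x,z,t]$ with $\varphi(x)=\alpha x+\beta$ for some $\alpha\in\C^*$ and $\beta\in\C$.

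The main obstacle is to exclude $\beta\ne 0$, which I would address by a fiber comparison. The isomorphism $\Phi$ sends the principal ideal $(x)\subset A(g)$ onto $(\alpha x+\beta)=(x-c)\subset A(f)$ where $c=-\beta/\alpha$, so $A(g)/(x)\cong A(f)/(x-c)$. On one hand, $A(g)/(x)\cong\C[y,z,t]/(r_0(z,t))$ is the cylinder $C_0\times\A^1$ over the curve $C_0=\{r_0=0\}$. On the other hand, if $\beta\ne 0$ then $c\ne 0$ and substituting $x=c$ in the defining equation of $X(f)$ solves linearly for $y$, giving $A(f)/(x-c)\cong\C[z,t]\cong\A^2$. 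Thus the degenerate fiber $\pi^{-1}(0)\subset X(g)$ would be reduced and isomorphic to $\A^2$, which by the combination of Kaliman, Sathaye and Abhyankar–Eakin–Heinzer recalled in the introduction forces $X(g)\cong\A^3$, contradicting $\mathrm{ML}(X(g))=\C[x]\ne\C$. So $\beta=0$ and $\varphi$ fixes $(x)$.

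It then remains to show $\varphi(J_g)\subseteq J_f$. Applying $\Phi$ to $x^dy+r_0+xg=0$ in $A(g)$ gives $\alpha^d x^d\Phi(y)=-\varphi(r_0+xg)$, so $\varphi(r_0+xg)\in x^dA(f)\cap\C[x,z,t]$; combined with $\varphi(x^d)=\alpha^d x^d\in J_f$, everything reduces to the containment $x^dA(f)\cap\C[x,z,t]\subseteq J_f$. To verify this, I would expand an element of $A(f)=\C[x,z,t][(r_0+xf)/x^d]$ in minimal degree as a polynomial in $T=(r_0+xf)/x^d$, clear denominators in $p/x^d=\sum_{i=0}^n q_i T^i$, and reduce modulo successive powers of $x$; this forces the leading coefficient $q_n$ into $(x^d)$, after which the identity $x^dT=-(r_0+xf)$ in $A(f)$ allows the degree to drop to $n-1$, giving an induction that terminates at $n\le 1$ where $p\in J_f$ by inspection. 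With $\varphi(J_g)\subseteq J_f$ established, $\varphi$ lies in the domain of the preceding construction; since both $\tilde\varphi$ and $\Phi$ restrict to $\varphi$ on $\C[x,z,t]$, the uniqueness in the universal property of affine modifications yields $\tilde\varphi=\Phi$, completing the proof of bijectivity.
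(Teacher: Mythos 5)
Your proof is correct and follows essentially the same route as the paper's: use preservation of the Derksen and Makar-Limanov invariants to get $\varphi(x)=\alpha x+\beta$, rule out $\beta\neq 0$ by comparing the fibre of $\mathrm{pr}_x$ over $0$ (the non-$\mathbb{A}^2$ cylinder $C_0\times\mathbb{A}^1$) with the fibres over nonzero points, and conclude via the identity $J_f=x^dA(f)\cap\C[x,z,t]$. The only differences are that you spell out details the paper leaves implicit (the verification of that lattice identity and the appeal to Kaliman--Sathaye), plus a harmless sign slip in $x^dT=r_0+xf$.
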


\begin{proof}
Note first that the hypotheses imply that neither $X(f)$ nor $X(g)$ is isomorphic to $\mathbb{A}^3$ and hence, as a consequence of \cite{Sath}, that the surface $C_{0}\times \mathbb{A}^1=\{r_{0}=0\}\subset\mathrm{Spec}(\mathbb{C}[x,z,t])$ is not isomorphic to $\mathbb{A}^2$. Since an isomorphism between $A(g)$ and $A(f)$ preserves the Makar-Limanov  and the Derksen invariants, it restricts to an automorphism $\varphi$ of $\C[x,z,t]$ and an automorphism of $\C[x]$. That is,  $\varphi(x)$ is of the form $ax+b$  where $a\in \C^*$ and $b\in \C$. Actually, $b=0$ since by the previous remark the zero set of $ax+b$ in $X(f)$ and $X(g)$ is non isomorphic to $\mathbb{A}^2$ if and only if $b=0$. This shows that $\varphi$ fixes the ideal $(x)$. Noting that $J_g=x^dA(g)\cap \C[x,z,t]$ and similarly for $J_f$, we conclude that $\varphi(J_g)=J_f$, which completes the proof.
\end{proof}

\section{Automorphisms of Koras-Russell threefolds of the first kind}

In this section, we consider  Koras-Russell threefolds of the first kind  $X=X(d,k,\ell)$ corresponding to the cases where $d\ge 2$, $r_0=z^k+t^\ell$, and $g=1$. Since $\mathrm{Dk}(X)=\C[x,z,t]$ and $\mathrm{ML}(X)=\C[x]$, we deduce from Lemma \ref{rem} that the projection $\sigma=\mathrm{pr}_{x,z,t}\mid_X: X \rightarrow \mathbb{A}^3$ gives rise to an isomorphism between the automorphism group of $X$ and the subgroup $\mathcal{A}$ of automorphisms of $\C[x,z,t]$ which preserve the ideals $(x)$ and $(x^d, r_0+x)$. In particular, $\C^*$ acts linearly on $X$. In fact, letting $\mathcal{A}_n$, $1\leq n\leq d$ be the normal subgroup of $\mathcal{A}$ consisting of the automorphisms $\varphi$ which fix $x$ and which are congruent to the identity modulo $(x^n)$, it was shown more precisely in \cite{d-mj-p,mj09} that $$\mathrm{Aut}(X)\simeq \mathcal{A}_1\rtimes \C^* \quad \textrm{and} \quad \mathcal{A}_n/\mathcal{A}_{n+1}\cong (\C[z,t],+)\quad\textrm{for~all~}1\le n\le d-1.$$

\noindent The next proposition summarizes several consequences of this description:

\begin{proposition} Let $X=X_{d,k,\ell}\subset\ba^4$ be a Koras-Russell threefold of the first kind. Then the following hold:

\noindent \; $1)$ Every  automorphism of $X$ extends to an automorphism of $\ba^4$.

\noindent \; $2)$ The group $\mathrm{Aut}(X)$ acts on $X$ with exactly 4 orbits:

\quad  - an open orbit $\{x\neq 0\}\simeq\C^*\times \C^2$,

\quad  - a copy of $\C^*\times\C$ given by $x=0$ and $z\not=0$,

\quad  - the line $\{x=z=t=0\}$ minus the point $(0,0,0,0)$,

\quad  - a fixed point $(0,0,0,0)$.

\noindent \; $3)$ Every finite subgroup of $\mathrm{Aut}(X)$ is cyclic.

\noindent \; $4)$ Every one-parameter unipotent subgroup of $\mathrm{Aut}(X)$ is contained in $\mathcal{A}_d$. In particular, the subgroup generated by all $\G_a$-actions on $X$ is strictly smaller than $\mathcal{A}_1$.

\end{proposition}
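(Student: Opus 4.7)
Throughout, my plan uses the presentation $\Aut(X)\simeq\mathcal{A}_1\rtimes\C^*$ and the filtration $\mathcal{A}_1\supset\mathcal{A}_2\supset\cdots\supset\mathcal{A}_d$, together with the realization of the $\C^*$-factor as the linear $\G_m$-action on $\mathbb{A}^4$ with weights $(k\ell,(1-d)k\ell,\ell,k)$ on $(x,y,z,t)$, and the two basic locally nilpotent derivations $\Delta_z = x^d\partial_z - kz^{k-1}\partial_y$ and $\Delta_t = x^d\partial_t - \ell t^{\ell-1}\partial_y$ on $A(X)$. For~(1), the $\C^*$-factor extends tautologically. For $\varphi\in\mathcal{A}_1$, writing $\varphi(r_0+x) = Ax^d + B(r_0+x)$, Lemma~\ref{rem} pins the lift to $A(X)$ as $\tilde\varphi(y) = By - A$; but the naive map $\Phi(y) = By - A$ on $\mathbb{A}^4$ is not an automorphism in general, since $B$ is only a unit modulo $x^d$ and not a nonzero constant. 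I plan to proceed by descending induction on the filtration index: for $\varphi\in\mathcal{A}_d$ one can take $B=1$, so $\Phi(y) = y - A$ is triangular hence invertible; for $\varphi\in\mathcal{A}_n$ with $n<d$, I will use the identification $\mathcal{A}_n/\mathcal{A}_{n+1}\cong(\C[z,t],+)$ to represent each class by a canonical ``partial Hamiltonian'' automorphism of the form $z\mapsto z+\ell t^{\ell-1}x^n h$, $t\mapsto t - kz^{k-1}x^n h$ (with a matching correction in $y$), each admitting a triangular extension to $\mathbb{A}^4$, and then compose with an element of $\mathcal{A}_{n+1}$ before iterating. The main technical work is to pin down these canonical representatives compatibly across the layers.

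For~(2), the Makar--Limanov invariant forces every automorphism to preserve $\{x=0\}$, so the open orbit lies in $\{x\neq 0\}$; on this locus, $\C^*$ scales $x$ freely and the $\G_a$-flows of $\Delta_z,\Delta_t$ translate $(z,t)$ by arbitrary multiples of the invertible $x^d$, giving transitivity. Restricted to $\{x=0\}$ those same flows act as $y\mapsto y - kz^{k-1}s$ and $y\mapsto y - \ell t^{\ell-1}s$, sweeping out the full line $\mathbb{A}^1_y$ at every point with $(z,t)\neq(0,0)$; combined with the $\C^*$-action on the smooth locus of the cuspidal curve $C_0 = \{z^k+t^\ell=0\}$, which forms a single $\C^*$-orbit under the quasi-homogeneous parametrization with weights $\ell, k$, this produces the orbit $\C^*\times\C$. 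Finally on $\{x=z=t=0\}$ the unipotent part acts trivially while $\C^*$ acts on $y$ with the nonzero weight $(1-d)k\ell$, separating $\{y\neq 0\}\cong\C^*$ from the fixed origin.

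For~(3), the key point is that $\mathcal{A}_1$ is torsion-free. An element of $\mathcal{A}_d$ fixes the origin of $\mathbb{A}^3$ with identity linearization, so a formal-power-series argument applied to the lowest nonvanishing homogeneous component of $\varphi - \mathrm{id}$ forces any nontrivial such element to have infinite order. The exact sequences $1\to\mathcal{A}_{n+1}\to\mathcal{A}_n\to(\C[z,t],+)\to 1$, combined with the fact that an extension of a torsion-free group by a torsion-free group is itself torsion-free, then propagate the property to all of $\mathcal{A}_1$. Any finite subgroup of $\mathcal{A}_1\rtimes\C^*$ therefore meets $\mathcal{A}_1$ trivially and embeds into $\C^*$, hence is cyclic.

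Part~(4) I expect to be the most delicate. Let $D$ be a nontrivial LND on $A(X)$; since $\ML(X)=\C[x]$ we have $D(x)=0$, and by Lemma~\ref{rem} $D$ restricts to an LND on $\C[x,z,t]$ of the form $p\partial_z + q\partial_t$, with the condition $D(r_0+x)\in (x^d,r_0+x)$ yielding $r_{0,z}p + r_{0,t}q = Ax^d + B(r_0+x)$. My plan is to iterate the following step $d$ times: reducing modulo $x$, the induced derivation $D_0 = p|_{x=0}\partial_z + q|_{x=0}\partial_t$ is an LND on $\C[z,t]$ satisfying $r_{0,z}p|_{x=0} + r_{0,t}q|_{x=0} = B|_{x=0}\cdot r_0\in(r_0)$, so $D_0$ preserves the ideal $(r_0)$. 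By Rentschler's theorem $\ker D_0$ is generated by a variable of $\C[z,t]$, and if $D_0\neq 0$ the irreducible $D_0$-invariant curve $C_0$ would be a single fiber of this variable, forcing $r_0$ itself to be a variable of $\C[z,t]$ --- which contradicts the Abhyankar--Moh theorem, since $C_0$ has a cuspidal singularity and is not isomorphic to $\mathbb{A}^1$. Hence $D_0 = 0$, so $p,q\in(x)$ and $B\in(x)$; writing $p=xp_1$, $q=xq_1$, $B=xB_1$ and dividing the relation through by $x$ gives $r_{0,z}p_1 + r_{0,t}q_1 = Ax^{d-1} + B_1(r_0+x)$, of the same shape with exponent $d-1$. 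After $d$ iterations one obtains $p,q\in(x^d)\,\C[x,z,t]$, hence $\exp(sD)\in\mathcal{A}_d$. The strict inclusion $\mathcal{A}_d\subsetneq\mathcal{A}_1$ is immediate from $\mathcal{A}_1/\mathcal{A}_2\cong\C[z,t]\neq 0$, which gives the ``in particular'' statement.
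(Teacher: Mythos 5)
Your treatments of parts 3) and 4) are correct and essentially coincide with the paper's: for 3) the paper shows directly that $\mathcal{A}_1$ is torsion-free by looking at the lowest $x$-adic order $N$ at which $\varphi(z)\not\equiv z$ and noting $\varphi^m(z)\equiv z+mf x^N$, which is a slightly more economical version of your base-case-plus-extensions argument (and avoids needing the quotients $\mathcal{A}_n/\mathcal{A}_{n+1}$ at all); for 4) the paper runs the same $x$-adic induction, asserting without proof the key fact that no nontrivial $\mathbb{G}_a$-action on $\mathbb{A}^2$ preserves a singular plane curve, which your Rentschler/Abhyankar--Moh digression correctly justifies. For parts 1) and 2) the paper simply cites \cite{d-mj-p} and \cite{mj09}, so your attempt to give direct arguments is where the real comparison lies, and there are genuine gaps there.

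For 1), what you have written is a plan, not a proof: you correctly identify that the naive lift $\Phi(y)=By-A$ fails because $B$ is only a unit modulo $(x^d)$, and that elements of $\mathcal{A}_d$ extend triangularly (this is exactly the homomorphism $j$ constructed in the paper's remark following the proposition), but the entire difficulty of the statement is concentrated in the layers $\mathcal{A}_n$ with $1\le n<d$, and you leave unverified both that your proposed ``partial Hamiltonian'' representatives actually lie in $\mathcal{A}_n$ (preservation of the ideal $(x^d,r_0+x)$ must be checked, not just to first order in $x^n$) and that they extend to $\mathbb{A}^4$; ``the main technical work is to pin down these canonical representatives compatibly across the layers'' is precisely the content of the cited references and cannot be waved at. For 2), the transitivity arguments on each stratum via $\Delta_z$, $\Delta_t$ and the $\C^*$-action are fine, but you assert without justification that ``on $\{x=z=t=0\}$ the unipotent part acts trivially.'' This is not obvious: an element $\varphi\in\mathcal{A}_1$ acts on $y$ by $y\mapsto By-A$ where $\varphi(r_0+x)=Ax^d+B(r_0+x)$, so it translates the line $\{x=z=t=0\}$ by $-A(0,0,0)$, and one must prove $A(0,0,0)=0$ (this ultimately comes from the fact that $r_0$, $r_{0,z}$, $r_{0,t}$ all vanish at the singular point of $C_0$, forcing the admissible first-order corrections of $z,t$ to vanish there). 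Without that, you have not shown that $(0,0,0,0)$ is a fixed point, i.e.\ that the four strata are genuinely distinct orbits rather than three.
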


\begin{remark} In contrast with Property 2) above, the group of holomorphic automorphisms of $X$ acts with at most three orbits. Indeed, one checks for instance that the holomorphic automorphism $\Psi$ of $\ba^4$ defined by
  $$\Psi(x,y,z,t)=(x,\textrm{e}^{x^{d-1}}y-\frac{1-\textrm{e}^{x^{d-1}}}{x^{d-1}},\textrm{e}^{\frac{x^{d-1}}{k}}z,\textrm{e}^{\frac{x^{d-1}}{\ell}}t)$$
maps $X$ onto itself. Hence $\Psi$ induces a holomorphic automorphism $\psi$ of $X$ for which $\psi(0,0,0,0)=(0,1,0,0)$, i.e. $(0,0,0,0)$ is no longer a fixed point for the action of holomorphic automorphisms of $X$. The exact number of orbits under the action this group is not known. In particular, it is still an open question whether any threefold $X_{d,k,\ell}$ is biholomorphic to the affine space.
\end{remark}

\begin{proof}
Properties 1) and 2) were established in \cite{d-mj-p} for the Russell cubic and in \cite{mj09} for the general case.

The third property was originally formulated as a question by V. Popov. Since $\mathrm{Aut}(X)\simeq \mathcal{A}_1\rtimes \C^*$, it is enough to show that $\mathcal{A}_1$ does not contain non-trivial torsion elements. Indeed, if so, the projection to the second factor $\C^*$ will induce an isomorphism between every finite subgroup of $\mathrm{Aut}(X)$ and a subgroup of $\C^*$. So suppose that $\varphi\in\mathcal{A}_1$ is a nontrivial torsion element, say of order $m\geq 2$. By possibly switching $z$ and $t$, we can further assume that $\varphi(z)\not=z$. Choosing $N\in \N$ minimal with the property that $\varphi(z)\equiv z+f(z,t)x^N \mod (x^{N+1})$ for some $f\in\C[z,t]\setminus\{0\}$, we would have $\varphi^m(z)\equiv z+mf(z,t)x^N\not\equiv z \mod (x^{N+1})$, a contradiction.

For the last property, it follows from Lemma \ref{rem} that every additive group action on $X$ is induced by a locally nilpotent derivation $D$ of the coordinate ring of $X$ extending a locally nilpotent $\C[x]$-derivation of $\C[x,z,t]$ which maps the ideal $J=(x^d,z^k+t^\ell+x)$ into itself, the second condition being equivalent to the property that the corresponding exponential automorphisms preserve this ideal. We will show that in fact the image of $D$ is contained in the ideal $(x^d)$, which implies that the corresponding one-parameter unipotent subgroup of $\mathrm{Aut}(X)$ is contained in $\mathcal{A}_d$. We prove this by induction, assuming that $D\equiv 0$ modulo $(x^k)$ with $0\le k<d$. Since $D(z^k+t^\ell+x)=ax^d+b(z^k+t^\ell+x)$ where $a,b\in\C[x,z,t]$, the hypothesis implies that $x^k$ divides $b$. On the other hand, $D_1=x^{-k}D$ is again a locally nilpotent derivation such that $D_1(x)=0$ and for which we have $D_1(z^k+t^\ell+x)=ax^{d-k}+(b/x^k)(z^k+t^\ell+x)$. Thus $D_1$ induces a locally nilpotent derivation $\overline{D_1}$ of $\C[x,z,t]/(x)\cong\C[z,t]$ which maps the ideal generated by $z^k+t^\ell$ into itself. So $\overline{D_1}$ is necessarily trivial as there is no nontrivial $\G_a$-action preserving a singular plane curve. Thus $D_1\equiv 0$ modulo $(x)$ and hence $D\equiv 0 \mod (x^{k+1})$.
\end{proof}

\begin{remark}  Recall that we always have an exact sequence of groups $$ 0\rightarrow \mathrm{Aut}_0(\ba^4,X)\rightarrow \mathrm{Aut}(\ba^4,X)\stackrel{\rho}{\rightarrow}\mathrm{Aut}(X)$$ where $\mathrm{Aut}(\ba^4,X)$ denotes the subgroup of $\mathrm{Aut}(\ba^4)$ consisting of automorphisms which leave $X$ invariant and where $\mathrm{Aut}_0(\ba^4,X)$ denotes the kernel of $\rho$. The surjectivity of $\rho$ was established in Property 1) of the above proposition by constructing explicit lifts of automorphisms of $X$ to automorphisms of $\ba^4$. Nevertheless, this construction was only set-theoretic and it is not clear whether the above sequence splits.
Note however that since an element $\varphi\in \mathcal{A}_d$ is the identity modulo $(x^d)$ and preserves the subring $\C[x,z,t]$ of the coordinate ring of $X$, it lifts in a natural way to an automorphism $\Phi$ of $\C[x,y,z,t]$ by letting simply $\Phi(y)=y+(\varphi(r_0+x)-r_0-x)/x^d$. This gives  rise to group homomorphism $\mathcal{A}_d\rightarrow \mathrm{Aut}(\ba^4,X)$, $\varphi\mapsto \Phi$, which, combined with the fact that the action of $\C^*$ on $X$ comes as the restriction of a linear action on $\ba^4$, extends to a group homomorphism $j:\mathcal{A}_d\rtimes \C^*\rightarrow \mathrm{Aut}(\ba^4,X)$ such that $\rho \circ j=\mathrm{id}$. We do not know whether $j$ can be extended to a splitting of the above exact sequence.
 \end{remark}

\section{Deformations of Koras-Russell threefolds }
In this section, we consider hypersurfaces  $X(g)$  of $\ba^4=\mathrm{Spec}(\C[x,y,z,t])$ defined by equations of the form  $$x^dy+r_0(z,t)+xg(x,z,t)=0$$ where $r_0=z^k+t^\ell$ and $d\geq 2$ are fixed, and we let the polynomial $g\in\mathbb{C}[x,z,t]$ vary. The case $r_0=z^2+t^3$ was treated in \cite{d-mj-p2}, leading to the construction of large families of non-isomorphic smooth affine threefolds that are all biholomorphic to each other and diffeomorphic to the affine space. Here, we show that similar techniques can be applied to find isomorphisms between deformations of hypersurfaces in a more general setting.

Theorem \ref{MainTh} below relies again in a crucial way on the fact that the Derksen and Makar-Limanov invariant of threefolds $X(g)$ are equal to $\mathbb{C}[x,z,t]$ and $\mathbb{C}[x]$ respectively. These properties can be checked using the methods developed in \cite{K-ML2}, namely via a careful study of homogeneous locally nilpotent derivations on a well-chosen quasi-homogeneous deformation of $X(g)$. The complete proof is quite long and technical but is essentially straightforward using the aforementioned methods. We shall omit it here, in particular since it involves no new ideas or arguments.

\begin{notation}
 We will be considering derivations of $\C[z,t]$ defined by the Jacobian of a polynomial. If $f\in\C[z,t]$, we denote by $f_z$ the partial derivative $\partial f/\partial z$, and by $f_t$ the  partial derivative $\partial f/\partial t$. For $f,g\in \C[z,t]$, the Jacobian $\Jac(f,g)$ denotes $f_zg_t-f_tg_z$. Finally, $\Jac(f,\cdot)$ denotes the derivation of $\C[z,t]$ defined by $g\mapsto \Jac(f,g)$.
\end{notation}

Let $B=\C[a_{i,j}]$ be the polynomial ring in the $(k-1)(\ell-1)$ indeterminates $a_{i,j}$, $0\leq i\leq k-2, 0\leq j\leq \ell -2$, let $\mathfrak{m}_0\subset B$ be the maximal ideal generated by the $a_{i,j}$ and let $$F=r_0+\sum_{0\leq i\leq k-2,\, 0\leq j\leq \ell -2} a_{i,j} z^it^j \in B[z,t].$$
Given a homomorphism $\alpha :B \rightarrow \C[x]$ such that $\alpha(\mathfrak{m}_0)\subset x\C[x]$, the image of $F$ in $\C[x]\otimes_B B[z,t]\simeq \C[x,z,t]$ has the form $r_0+xg_{\alpha}$ for some polynomial $g_{\alpha}$ belonging to the $\C[x]$-submodule of $\C[x,z,t]$ generated by the monomials  $z^it^j$ with $0\leq i\leq k-2$ and $0\leq j\leq \ell-2$. Every such homomorphism $\alpha$ thus determines a threefold $\mathfrak{X}_{\alpha}=X(g_\alpha)$ defined by the equation $x^dy+r_0+xg_{\alpha}=0$.

\begin{theorem} \label{MainTh} With the notation above, the following statements hold:
\begin{itemize}
\item[$1)$]\  For every $g\in\C[x,z,t]$, there exists a homomorphism $\alpha:B \rightarrow \C[x]$ such that $X(g)$ is isomorphic to $\mathfrak{X}_{\alpha}$.
\item[$2)$]\  Two homomorphisms $\alpha,\beta:B\rightarrow \C[x]$ determine isomorphic threefolds $\mathfrak{X}_{\alpha}$ and $\mathfrak{X}_{\beta}$ if and only if
there exist constants $\lambda,\mu\in\C^*$ such that the following diagram commutes \[\xymatrix{ B \ar[rr]^-{\pi_d\circ \alpha} \ar[d]_{\zeta_{\lambda}} & & \C[x]/(x^d) \ar[d]^{x\mapsto \mu x} \\ B \ar[rr]^-{\pi_d\circ \beta} & & \C[x]/(x^d)}\] where $\pi_d:\C[x]\rightarrow \C[x]/(x^d)$ denotes the natural projection and where $\zeta_{\lambda}:B\rightarrow B$ is the linear automorphism defined by $a_{i,j}\mapsto \lambda^{\ell i+kj-k\ell} a_{i,j}$.
\end{itemize}
\end{theorem}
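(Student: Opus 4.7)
The plan is to apply Lemma~\ref{rem} throughout: under the assumption (taken as known in the paper) that the Derksen and Makar-Limanov invariants of every $X(g)$ in this family equal $\C[x,z,t]$ and $\C[x]$ respectively, each isomorphism $\mathfrak{X}_\beta\simeq\mathfrak{X}_\alpha$ corresponds bijectively to an automorphism $\varphi$ of $\C[x,z,t]$ fixing the ideal $(x)$ with $\varphi(J_{g_\beta})=J_{g_\alpha}$; since $J_g$ depends only on $g\bmod x^{d-1}$, the entire argument takes place in $\C[x,z,t]/(x^d)$. The structural ingredient is that the monomials $\{z^it^j : 0\le i\le k-2,\ 0\le j\le\ell-2\}$ parameterizing $B$ form a basis of the Milnor algebra $\C[z,t]/(z^{k-1},t^{\ell-1})$ of the singularity $r_0=z^k+t^\ell$.

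For Part 1, I would write $g=\sum_{n\ge 0}x^nh_n(z,t)$ and, by induction on $n=0,\ldots,d-2$, apply the automorphism $x\mapsto x$, $z\mapsto z+x^{n+1}u_n$, $t\mapsto t+x^{n+1}v_n$: a direct computation shows it leaves $h_0,\ldots,h_{n-1}$ unchanged and modifies $h_n$ by $-ku_nz^{k-1}-\ell v_nt^{\ell-1}$, so $u_n,v_n\in\C[z,t]$ can be chosen so that the new $h_n$ lies in the normal-form basis. Composing these $d-1$ steps sends $J_g$ onto $J_{g_\alpha}$ for a suitable $\alpha$. For Part~2 $(\Leftarrow)$, the direct candidate $\varphi:x\mapsto\mu x$, $z\mapsto\lambda^\ell z$, $t\mapsto\lambda^k t$ satisfies
\[
\varphi(r_0+xg_\beta)=\lambda^{k\ell}r_0+x\sum_{i,j}\mu\lambda^{\ell i+kj}p_{ij}^\beta(\mu x)z^it^j,
\]
where $p_{ij}^\beta(x):=\beta(a_{ij})/x$, and the commutativity of the diagram is exactly the identity $p_{ij}^\alpha(x)\equiv\mu\lambda^{\ell i+kj-k\ell}p_{ij}^\beta(\mu x)\bmod x^{d-1}$, yielding $\varphi(r_0+xg_\beta)\equiv\lambda^{k\ell}(r_0+xg_\alpha)\bmod x^d$.

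The hard part is the converse $(\Rightarrow)$. Lemma~\ref{rem} produces $\varphi$; since units of $\C[x,z,t]$ are constants, $\varphi(x)=\mu x$ for some $\mu\in\C^*$. Reducing modulo $x$, the induced automorphism $\bar\varphi$ of $\C[z,t]$ satisfies $\bar\varphi(z^k+t^\ell)=c(z^k+t^\ell)$ for some $c\in\C^*$, and by the classical description of the automorphism group of the quasi-homogeneous cuspidal curve $\{z^k+t^\ell=0\}$ (where $\gcd(k,\ell)=1$ is crucial) one concludes $\bar\varphi(z)=\lambda^\ell z$, $\bar\varphi(t)=\lambda^k t$ for some $\lambda\in\C^*$. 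Factoring $\varphi=\psi\circ\theta$ with $\psi:(x,z,t)\mapsto(\mu x,\lambda^\ell z,\lambda^k t)$, the residual $\theta$ satisfies $\theta(x)=x$, $\theta\equiv\mathrm{id}\bmod x$, and $\theta(J_{g_\beta})=J_{g_\gamma}$ for the normal form $g_\gamma$ obtained from $\psi^{-1}(g_\alpha)$ by rescaling. It therefore suffices to prove the uniqueness statement that any such $\theta$ forces $g_\beta\equiv g_\gamma\bmod x^{d-1}$. This is the principal technical obstacle and is proved by induction on $n=0,\ldots,d-2$: at each level one expands $\theta(r_0+xg_\beta)=\bar u(r_0+xg_\gamma)+wx^d$, extracts the coefficient of $x^{n+1}$ modulo the Jacobian ideal $(z^{k-1},t^{\ell-1})$, and observes that the obvious linear corrections $kz^{k-1}A_n$, $\ell t^{\ell-1}B_n$, $\bar u_{n+1}r_0$ all lie in that ideal; the remaining nonlinear contributions coming from the higher Taylor expansion of $(z+xA)^k$ and $(t+xB)^\ell$ are killed by combining the inductive hypothesis with the automorphism condition $\det D\theta\in\C^*$, which (starting from $A_{0,z}+B_{0,t}=0$) forces the relevant lower-order coefficients $\bar u_m$ to already lie in $(z^{k-1},t^{\ell-1})$. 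The equality $h_{\beta,n}=h_{\gamma,n}$ then follows by linear independence of the normal-form basis, and unwinding the explicit formula $g_\gamma=\psi^{-1}(g_\alpha)$ identifies the chain of equalities with the commutativity of the displayed diagram.
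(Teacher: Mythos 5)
Your overall strategy is the same as the paper's (split $\C[z,t]$ as the span of the monomials $z^it^j$, $0\le i\le k-2$, $0\le j\le\ell-2$, plus the ideal $(z^{k-1},t^{\ell-1})$, and induct on the order to which the relevant automorphism agrees with the identity modulo powers of $x$), and your ``if'' direction of 2) matches the paper's computation. But there are two genuine gaps. In Part 1, the maps $x\mapsto x$, $z\mapsto z+x^{n+1}u_n$, $t\mapsto t+x^{n+1}v_n$ with arbitrary $u_n,v_n\in\C[z,t]$ are in general \emph{not} automorphisms of $\C[x,z,t]$: their Jacobian determinant is $1+x^{n+1}(\partial u_n/\partial z+\partial v_n/\partial t)+\cdots$, which need not be a unit (e.g.\ $z\mapsto z+xz$ is not surjective). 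Lemma \ref{rem} requires honest automorphisms, so you cannot choose $(u_n,v_n)$ freely to kill the component of $h_n$ in $(z^{k-1},t^{\ell-1})$. The paper's workaround is to use only divergence-free, i.e.\ Hamiltonian, corrections $\exp(-x^{\nu}\mathrm{Jac}(h,\cdot))$, whose truncations have Jacobian $1$ and lift to genuine automorphisms of $\C[x][z,t]$ by the theorem of van den Essen--Maubach--V\'en\'ereau; the price is that $h\mapsto\mathrm{Jac}(h,r_0)$ only surjects onto $(z^{k-1},t^{\ell-1})$ \emph{modulo} $r_0$, which forces the extra unit factor $(1-x^{\nu}f)$ in the congruence and is exactly why the target ideal is only reached up to that unit.

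In the converse of Part 2, your reduction to a $\C[x]$-automorphism $\theta\equiv\mathrm{id}\bmod(x)$ is fine, but the inductive step is where the work lies and your sketch does not supply it. Extracting the coefficient of $x^{\nu}$ gives (via the Jacobian-$1$ condition) a Hamiltonian deviation $\mathrm{Jac}(\cdot,h)$ and hence $s_{\alpha,\nu}=s_{\beta,\nu}$ together with $\mathrm{Jac}(r_0,h)\in r_0\C[z,t]$; the essential further step, which you omit, is to conclude $h=\gamma r_0+c$ (using connectedness/irreducibility of $\{r_0=0\}$) and then to \emph{correct} $\varphi$ by the lift of $\exp(x^{\nu}\mathrm{Jac}(\cdot,\gamma(r_0+xg_{\alpha})))$, an automorphism that preserves the ideal $(x^d,r_0+xg_{\alpha})$ on the nose, so that the composite is congruent to the identity modulo $x^{\nu+1}$ and the induction restarts cleanly. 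Your alternative claim that the higher-order ``nonlinear contributions are killed by the automorphism condition $\det D\theta\in\C^*$'' is not an argument: without the correction step the unknown $h$ contaminates all higher coefficients of $\varphi(r_0+xg_{\alpha})$, and controlling those terms directly is precisely what the correction avoids. As written, both halves of the proposal need the [EMV]-lifting-of-Hamiltonian-exponentials mechanism that the paper is built on.
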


\begin{remark} Noting that the subvariety $\mathcal{V}$ of $T\times \ba^2=\mathrm{Spec}(B[z,t])$ with equation $F=0$ is the (mini)-versal deformation of the curve $C_0\subset \ba^2$ with equation $r_0=0$ (see e.g. \S 14.1 in \cite{Har}), we can re-interpret the above result as the fact that for fixed $d\geq 2$, isomorphism classes of hypersurfaces of the form $X(g)$ are in one-to-one correspondence with one-parameter infinitesimal embedded deformations of order $d-1$ of $C_0$, up to the equivalence between such deformations defined in the second assertion of the theorem.
\end{remark}

\begin{proof}
1) In view of Lemma \ref{rem}, to prove the first assertion, it is enough to
show that, given a polynomial  $g\in\mathbb{C}[x,z,t]$, there exist elements $b_{m}$,
$m=1,\ldots,d-1$ in the sub-vector space of $\mathbb{C}[z,t]$ generated
by the monomials $z^{i}t^{j}$, $0\leq i\leq k-2$, $0\leq j\leq\ell-2$
and a $\mathbb{C}[x]$-automorphism $\varphi$ of $\mathbb{C}[x][z,t]$
which maps the ideal $(x^{d},r_{0}+xg)$ into the ideal $(x^{d},r_{0}+\sum_{m=1}^{d-1}b_{m}x^{m})$.
We may write
\[
g=\sum_{m\geq1}u_{m}x^{m-1}=\sum_{m\geq1}(s_{m}+t_{m})x^{m-1}
\]
where for every $m$, $s_{m}$ belongs to the sub-vector space of
$\mathbb{C}[z,t]$ generated by the monomials $z^{i}t^{j}$, $0\leq i\leq k-2$,
$0\leq j\leq\ell-2$, while $t_{m}$ is in the ideal $(z^{k-1},t^{\ell-1})\mathbb{C}[z,t]$.
Let $t_0=0$ and denote by $\nu=\nu(g)$ the maximal integer with the property that $t_{m}=0$
for every $m\leq\nu-1$. If $\nu=d$ then we are done. Otherwise, we will proceed by induction.

Note that the image of the $\mathbb{C}$-derivation of $\mathbb{C}[z,t]/(r_{0})$
induced by the Jacobian derivation $\mathrm{Jac}(r_{0},\cdot)$ of
$\mathbb{C}[z,t]$ is equal to the ideal of $\mathbb{C}[z,t]/(r_{0})$
generated by the residue classes of $z^{k-1}$ and $t^{\ell-1}$. Indeed, one checks for example that, given $a\geq k-1$ and $b\geq0$, we have $$\mathrm{Jac}(r_0,\lambda z^{a-k+1}t^{b+1})=z^at^b-\lambda \ell(a-k+1)z^{a-k}t^br_0\equiv z^at^b\mod(r_0),$$
 where $\lambda=(k(b+1)+\ell(a-k+1))^{-1}$. This fact guarantees
the existence of polynomials $h,f\in\mathbb{C}[z,t]$ such that $\mathrm{Jac}(h,r_{0})=t_{\nu}+r_{0}f$.
Let us consider the $\mathbb{C}[x]/(x^{\nu+1})$-automorphism $\overline{\varphi}=\exp(-x^{\nu}\mathrm{Jac}(h,\cdot))$
of $\mathbb{C}[x]/(x^{\nu+1})[z,t]$. Since its Jacobian is a nonzero constant, it lifts, by the main theorem  in \cite{EMV}, to a $\mathbb{C}[x]$-automorphism $\varphi$ of $\mathbb{C}[x][z,t]$ such that $\varphi(x)=x$ and $\varphi(a)\equiv a-x^{\nu}\mathrm{Jac}(h,a)\mod(x^{\nu+1})$ for all $a\in\mathbb{C}[x,z,t]$. Then,  we have the following congruences modulo $(x^{\nu+1})$.
\begin{align*}
\varphi(r_{0}+xg) & \equiv  r_{0}+\sum_{m=1}^{\nu-1}s_{m}x^{m}+x^{\nu}(s_{\nu}+t_{\nu})-x^{\nu}\mathrm{Jac}(h,r_{0}) &\mod(x^{\nu+1})\\
  & \equiv r_{0}+\sum_{m=1}^{\nu}s_{m}x^{m}-x^{\nu}r_{0}f &\mod(x^{\nu+1})\\
  & \equiv (1-x^{\nu}f)(r_{0}+\sum_{m=1}^{\nu}s_{m}x^{m}) &\mod(x^{\nu+1}).
\end{align*}
It follows that there exists a polynomial $R\in\mathbb{C}[x,z,t]$ such that $\varphi(r_{0}+xg)\equiv(1-x^{\nu}f)(r_{0}+\sum_{m=1}^{\nu}s_{m}x^{m}+x^{\nu+1}R) \mod(x^{d})$. Letting $\widetilde{g}=\sum_{m=1}^{\nu}s_{m}x^{m-1}+x^{\nu}R$, we obtain that $\varphi$ maps the ideal $(x^{d},r_{0}+xg)$ into the ideal $(x^{d},r_{0}+x\widetilde{g})$,
and since $\nu(\tilde{g})>\nu(g)$ by construction, we are done by induction.
\\

2) Let us first rephrase the second assertion. If we let $\alpha(a_{i,j})=x\alpha_{i,j}(x)$ and $\beta(a_{i,j})=x\beta_{i,j}(x)$ with $\alpha_{i,j}, \beta_{i,j}\in\mathbb{C}[x]$ for $0\leq i\leq k-2, 0\leq j\leq \ell -2$, the latter states that the threefolds $\mathfrak{X}_{\alpha}$
and $\mathfrak{X}_{\beta}$ are isomorphic if and only if there exist two constants $\lambda,\mu\in\C^*$ such that
$$\mu x\alpha_{i,j}(\mu x)\equiv\lambda^{\ell i+kj-k\ell}x\beta_{i,j}(x) \mod(x^d)$$
 for all $0\leq i\leq k-2$ and all $0\leq j\leq \ell -2$.  If such constants $\lambda,\mu\in\mathbb{C}^{*}$
exist then the automorphism $\varphi$ of $\mathbb{C}[x,z,t]$ defined
by $\varphi(x)=\mu x$, $\varphi(z)=\lambda^{-\ell}z$, $\varphi(t)=\lambda^{-k}t$ satisfies that $\varphi(r_0+xg_{\alpha})\equiv\lambda^{-k\ell}(r_0+xg_{\beta})$ modulo $(x^{d})$. Thus, $\varphi$ maps the ideal $(x^{d},r_{0}+xg_{\alpha})$ into the ideal $(x^{d},r_{0}+xg_{\beta})$, and $\mathfrak{X}_{\alpha}$ and $\mathfrak{X}_{\beta}$ are isomorphic by Lemma \ref{rem}.

Conversely, suppose that $\mathfrak{X}_{\alpha}$
and $\mathfrak{X}_{\beta}$ are isomorphic and let $\varphi$ be an automorphism of $\mathbb{C}[x,z,t]$ which fixes the ideal $(x)$ and maps the ideal $(x^{d},r_{0}+xg_{\alpha})$ into the ideal $(x^{d},r_{0}+xg_{\beta})$.  Up to changing $\mathfrak{X}_{\beta}$
by its image under an isomorphism coming from an automorphism of $\mathbb{C}^3$ of the type $(x,z,t)\mapsto(\mu x,\lambda^{-\ell}z,\lambda^{-k}t)$ as above, we
may further assume that $\varphi$ is a $\mathbb{C}[x]$-automorphism
of $\mathbb{C}[x][z,t]$ which is the identity modulo $(x)$ and we are thus reduced to the following statement. ``\emph{Suppose that there exists a $\C[x]$-automorphism $\varphi$ of $\C[x][z,t]$ which is congruent to the identity modulo $(x)$ and such that $\varphi(r_0+xg_\alpha)\in (x^d,r_0+xg_\beta)$. Then $g_\alpha$ and $g_\beta$ are congruent modulo $(x^{d-1})$}.''

Choose $\nu$ maximal such that $\varphi$ is congruent to the identity modulo $(x^\nu)$.  If $\nu\geq d$ then we are done. So, suppose that $\nu\leq d-1$. Writing down that the Jacobian of  $\varphi$ is constant equal to 1, we remark that there exists a polynomial $h\in\mathbb{C}[z,t]$ such that $\varphi(z)$ and $\varphi(t)$ are congruent modulo $(x^{\nu+1})$ to $z+x^{\nu}h_t$ and to $t-x^{\nu}h_z$, respectively. Consequently, we have $\varphi(r_{0}+xg_{\alpha})\equiv r_{0}+xg_{\alpha}+x^{\nu}\mathrm{Jac}(r_{0},h)$ modulo $(x^{\nu+1})$. On the other hand, since $\varphi(r_{0}+xg_{\alpha})\in(x^{d},r_{0}+xg_{\beta})$ and since, by definition,  $g_{\alpha}$ and $g_{\beta}$  contain no monomials of the form $cx^{i_1}z^{i_2}t^{i_3}$ with $i_2\geq k-1$ or $i_3\geq\ell-1$, there exists  a polynomial $a\in\mathbb{C}[z,t]$ such that $\varphi(r_{0}+xg_{\alpha})\equiv r_0+xg_{\beta}+x^{\nu}ar_0$ modulo $(x^{\nu+1})$.

Writing $xg_{\alpha}=\sum_{m\geq1}s_{\alpha,m}x^{m}$
and $xg_{\beta}=\sum_{m\geq1}s_{\beta,m}x^{m}$ as  before, we have thus
$$ r_{0}+\sum_{m=1}^{\nu-1}s_{\alpha,m}x^{m} +x^{\nu}(s_{\alpha,\nu}+\mathrm{Jac}(r_{0},h)) \equiv  r_{0}+\sum_{m=1}^{\nu-1}s_{\beta,m}x^{m} +x^{\nu}(s_{\beta,\nu}+ar_0)$$ modulo $(x^{\nu+1})$.
Since $\mathrm{Jac}(r_{0},h)$ and $r_{0}$ both belong to the ideal $(z^{k-1},t^{\ell-1})$ of $\mathbb{C}[z,t]$,
we conclude that $s_{\alpha,m}=s_{\beta,m}$ for every $m=1,\ldots,\nu$
and that $\mathrm{Jac}(r_{0},h)\in r_{0}\mathbb{C}[z,t]$. This implies
in turn that $h=\gamma r_0+c$ for some $\gamma\in\mathbb{C}[z,t]$
and $c\in\mathbb{C}$. Now consider the $\mathbb{C}[x]/(x^{d})$-automorphism
$\overline{\theta}=\exp(\delta)$ of $\mathbb{C}[x]/(x^{d})[z,t]$
associated with the derivation $\delta=x^{\nu}\mathrm{Jac}(\cdot,\gamma(r_{0}+xg_{\alpha}))$.
Since $\overline{\theta}$ has Jacobian determinant equal to $1$ (see \cite{mj09}), we deduce again from \cite{EMV}
that it lifts to a $\mathbb{C}[x]$-automorphism $\theta$ of $\mathbb{C}[x][z,t]$.
By construction $\theta\equiv\varphi$ modulo $x^{\nu+1}$ and since
$r_{0}+xg_{\alpha}$ divides $\delta(r_{0}+xg_{\alpha})=x^{\nu}(r_{0}+xg_{\alpha})\mathrm{Jac}(r_{0}+xg_{\alpha},\gamma)$,
it maps the ideal $(x^{d},r_{0}+xg_{\alpha})$ into itself. Thus $\varphi_{1}=\varphi\circ\theta^{-1}$
is a $\mathbb{C}[x]$-automorphism of $\mathbb{C}[x][z,t]$ congruent
to identity modulo $x^{\nu+1}$ which maps $(x^{d},r_{0}+xg_{\alpha})$
into $(x^{d},r_{0}+xg_{\beta})$ and we can conclude the proof  by induction.
\end{proof}

\section{ An example of a non-extendible automorphism}

Most of the algebraic results of the previous two sections can be generalized to the situation where $r_0\in \C[z,t]$ defines a connected and reduced plane curve, provided that the Makar-Limanov and Derksen invariants of the corresponding threefolds are equal to $\C[x]$ and $\C[x,z,t]$ respectively. Here we illustrate a new phenomenon in a particular case where the zero set of $r_0$ is not connected.

For this section, we fix $d=2$,  $r_0=z(zt^2+1)$ and we let $X$ be the smooth hypersurface in $\ba^4=\mathrm{Spec}(\C[x,y,z,t])$ defined by the equation $$ P=x^2y+z(zt^2+1)=0.$$
Note that $X$ is neither factorial nor topologically contractible. It is straightforward to check using the methods in \cite{K-ML2} that $\mathrm{Dk}(X)=\C[x,z,t]$ and $\mathrm{ML}(X)=\C[x]$. We will use the fact that the plane curve $C_0$ with equation $r_0=z(zt^2+1)=0$ has two connected components to construct a particular automorphism $\tilde{\varphi}$ of $X$ which cannot extend to the ambient space $\ba^4$.

Starting from the  polynomial $h=zt^2\in\C[z,t]$, we first construct in a similar way as in \cite{mj09} an automorphism $\varphi$ of $\C[x,z,t]$ as follows: Noting that the $\C[x]/(x^2)$-automorphism $\overline\varphi=\exp(x\Jac(h,\cdot))$ of $\C[x]/(x^2)[z,t]$ has Jacobian equal to $1$, we deduce again from \cite{EMV}, that it lifts to a $\C[x]$-automorphism $\varphi$ of $\C[x,z,t]$. In other words, there exists an automorphism $\varphi$ of $\C[x,z,t]$  with $\varphi(x)=x$ and such that $\varphi\equiv\overline\varphi \mod (x^2)$.
One checks further using the explicit formulas  that $\overline\varphi(z)=(1-2tx)z$,  $\overline\varphi(t)=(1+tx)t$ and that $\overline{\varphi}(r_0)=(1-2xt)r_0$. So $\varphi$ preserves the ideals $(x)$ and $J=(x^2,z(zt^2+1))$ and hence lifts to a $\C[x]$-automorphism $\tilde{\varphi}$ of the coordinate ring $\C[X]$ of $X$.

\begin{proposition} The automorphism of $X\subset \ba^4$ determined by $\tilde{\varphi}$ cannot extend to an automorphism of the ambient space.
\end{proposition}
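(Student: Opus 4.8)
The plan is to argue by contradiction: suppose the automorphism $\tilde{\varphi}$ of $X$ extends to an automorphism $\Phi$ of $\ba^4=\mathrm{Spec}(\C[x,y,z,t])$ leaving $X$ invariant. Since $\tilde{\varphi}$ is a $\C[x]$-automorphism restricting to $\varphi$ on $\C[x,z,t]$ with $\varphi(x)=x$, and since by Lemma \ref{rem} the structure of such automorphisms is rigid, the strategy is to write down what $\Phi$ must do on the four coordinates and derive an incompatibility. The key point is that any extension $\Phi$ must satisfy $\Phi(P)=c\cdot P$ for some unit $c\in\C^*$ (as $X=\{P=0\}$ is reduced and $\Phi$ preserves the principal ideal $(P)$), and $\Phi$ must restrict on the subring $\C[x,z,t]$ to an automorphism of $\ba^4$ whose induced action on $X$ agrees with $\tilde{\varphi}$.

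First I would pin down the constraints on $\Phi$. Because $\tilde\varphi$ fixes $x$ and is the identity modulo $(x^2)$ on the coordinate ring, $\Phi$ should have the form $\Phi(x)=x$, $\Phi(z)=\varphi(z)+x^2(\cdots)$, $\Phi(t)=\varphi(t)+x^2(\cdots)$ and $\Phi(y)=\beta y+(\text{lower order in }y)$, where the images of $z,t$ are governed by $\overline\varphi(z)=(1-2tx)z$ and $\overline\varphi(t)=(1+tx)t$ modulo $x^2$. The crucial observation is that $\overline\varphi(z)=(1-2tx)z$ and $\overline\varphi(t)=(1+tx)t$ are \emph{not} the restrictions modulo $x^2$ of any polynomial automorphism of $\ba^4$ that acts correctly, because of the way the two connected components of $C_0=\{z=0\}\cup\{zt^2+1=0\}$ must be permuted or fixed. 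Concretely, I would look at how $\Phi$ acts on the two irreducible components of the divisor $\{x=0\}\cap X$, which correspond precisely to the two components of $C_0$.

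The main technical step, and the step I expect to be the principal obstacle, is to show that the necessary behavior on $z$ and $t$ cannot be realized by \emph{polynomial} formulas extending to all of $\ba^4$. The natural lift of $\tilde\varphi$ to $\ba^4$ would require expressing $\Phi(y)$ via $\Phi(y)=x^{-2}(\Phi(z(zt^2+1)) - \text{const}\cdot z(zt^2+1))$ or similar, and the obstruction is that $\Phi(z)\Phi(t)^2+1$ minus the appropriate multiple of $zt^2+1$ fails to be divisible by $x^2$ in $\C[x,z,t]$ unless the higher-order ($x^2$ and beyond) corrections to $\Phi(z),\Phi(t)$ take a form incompatible with being a global automorphism. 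In other words, one can satisfy the congruence modulo $x^2$ (which is all that is needed for an automorphism of $X$, since $y$ absorbs the $x^2$-term), but completing this to an honest polynomial automorphism of $\ba^4$ forces a divisibility or degree condition that has no solution.

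To make this precise I would compute $\Phi(P)=\Phi(x)^2\Phi(y)+\Phi(z)(\Phi(z)\Phi(t)^2+1)$ and compare it with $cP=c(x^2y+z(zt^2+1))$, matching coefficients order by order in $x$. Matching the $x^0$-term gives $\Phi(z)\big|_{x=0}\cdot(\Phi(z)\Phi(t)^2+1)\big|_{x=0}=c\,z(zt^2+1)$, which constrains the restrictions $\bar z=\Phi(z)|_{x=0}$ and $\bar t=\Phi(t)|_{x=0}$ to permute the factors $z$ and $zt^2+1$ as a factorization of $\C[z,t]$; this already forces $\bar z=\eta z$, $\bar t=\zeta t$ for roots of unity (up to the nonsplitting of $C_0$). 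The contradiction emerges at the next order: the term $\overline\varphi(z)=(1-2tx)z$ contributes a linear-in-$x$ piece to $\Phi(z)$ that cannot be reconciled with the requirement that $\Phi(z)\Phi(t)^2+1$ remain a polynomial multiple of the correct form, because the $t$-dependence introduced by the factor $(1-2tx)$ has no analogue in $z(zt^2+1)$ once one insists on global polynomiality in $y$. I expect the cleanest way to finish is to show that any such $\Phi$ would force $\Phi(y)$ to have a pole along $\{x=0\}$ (i.e. to lie in $\C[x,x^{-1},z,t]\setminus\C[x,z,t]$), contradicting that $\Phi$ is a polynomial automorphism; this is exactly where the two-component nature of $C_0$ obstructs extension, since for an irreducible $C_0$ the analogous lift succeeds as in Proposition 3.2, Property 1).
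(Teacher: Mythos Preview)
Your proposal has the right geometric intuition (the obstruction comes from the two components of $C_0$), but the technical implementation contains real gaps and the suggested contradiction mechanism is not the correct one.

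\medskip
\textbf{Unjustified normalizations.} You assert that an extension $\Phi$ must satisfy $\Phi(x)=x$ and $\Phi(z)\equiv\varphi(z)$, $\Phi(t)\equiv\varphi(t)$ modulo $(x^2)$. Neither follows from the hypothesis. The only thing you know is that $\Phi$ agrees with $\tilde\varphi$ \emph{modulo $(P)$}; thus $\Phi(x)-x$, $\Phi(z)-\varphi(z)$, $\Phi(t)-\varphi(t)$ could be arbitrary multiples of $P=x^2y+z(zt^2+1)$, which are not in $(x^2)$ in general. The paper has to work to get $\Phi(x)=x$: it passes to the generic fiber $A=\C(u)[x,y,z,t]/(P-u)$, uses that $\mathrm{ML}(A)=\C(u)[x]$ to force $\Phi(x)=\mu x+\nu$, then uses primality of $(x+c,P)$ for $c\neq 0$ to get $\nu=0$, and finally compares with $\tilde\varphi$ to get $\mu=1$. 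A further comparison modulo $(x)$ is needed to obtain $\lambda=1$ and that $\Phi$ is the identity modulo $(x)$.

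\medskip
\textbf{The pole argument does not work.} Your proposed contradiction --- that $\Phi(y)$ would be forced to lie in $\C[x,x^{-1},z,t]\setminus\C[x,z,t]$ --- cannot succeed as stated. Since $\tilde\varphi$ \emph{is} a genuine automorphism of $\C[X]$, the element $\tilde\varphi(y)$ is already a bona fide polynomial on $X$; the relation $\overline\varphi(r_0)=(1-2xt)r_0$ shows precisely that the required divisibility by $x^2$ holds at the level of $X$. The obstruction is not a pole in $y$, and order-by-order matching in $x$ alone will not detect it, because modulo $(P)$ there is too much freedom.

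\medskip
\textbf{The missing idea.} The paper's proof hinges on the generic-fiber trick: over $\C(u)$ the polynomial $r_0-u$ is \emph{irreducible}, and the Derksen invariant of $A$ being $\C(u)[x,z,t]$ forces $\Phi$ to preserve that subring and the ideal $(x^2,r_0-u)$. This yields $H\in\C[u][z,t]$ with $\Phi\equiv\mathrm{id}+x\,\mathrm{Jac}(H,\cdot)\pmod{x^2}$ and $\mathrm{Jac}(H,r_0-u)\in(r_0-u)$. Irreducibility of $r_0-u$ then forces $H\in\C[u]+(r_0-u)\C[u][z,t]$, so the image of $H$ in $\C[z,t]$ is \emph{globally} constant on $C_0$. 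But that image must be $h+c=zt^2+c$, which takes the value $c$ on $\{z=0\}$ and $-1+c$ on $\{zt^2+1=0\}$ --- locally constant but not constant. That is the contradiction, and it genuinely uses the passage to the irreducible curve $\{r_0=u\}$; your setup, which stays over $\C$ where $r_0$ factors, does not produce this constraint.
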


\begin{proof}
Suppose by contradiction that there exists an automorphism $\Phi$ of $\C[x,y,z,t]$ which extends $\tilde{\varphi}$. Then there exists a constant $\lambda\in\C^*$ such that $\Phi(P)=\lambda P$ and so we obtain a commutative diagram \[\xymatrix { \C[x,y,z,t] \ar[r]^{\Phi} & \C[x,y,z,t] \\ \C[u] \ar[u] \ar[r]^{u\mapsto \lambda u} & \C[u] \ar[u]}\] where $\C[u]\rightarrow \C[x,y,z,t]$ maps $u$ onto $P=x^2y+z(zt^2+1)\in \C[x,y,z,t]$. Passing to the field of fractions of $\C[u]$, we see that $\Phi$ induces an isomorphism between the $\C(u)$-algebras $$A=\C(u)[x,y,z,t]/(P-u) \quad \textrm{and} \quad A_\lambda=\C(u)[x,y,z,t]/(P-\lambda^{-1}u).$$ Now the key observation is that the Makar-Limanov and Derksen invariants of $A$ and $A_{\lambda}$ are equal to $\C(u)[x]$ and $\C(u)[x,z,t]$ respectively (this follows from an application of Theorem 9.1 of \cite{K-ML2} which in fact only requires that the base field has characteristic 0, we do not give the details here). Now since $\Phi$ induces an isomorphism between the Makar-Limanov invariants of $A$ and $A_{\lambda}$, it follows that  $\Phi(x)=\mu x+\nu$ for some  $\mu\in\C(u)^*$ and $\nu\in\C(u)$. From the fact that the ideal $(x,P)$ of $\C[x,y,z,t]$ is not prime, whereas the ideals $(x+c,P)$ are prime for all $c\in\C^*$, we can conclude that $\nu=0$ and since $\tilde{\varphi}(x)=x$, we eventually deduce that $\mu=1$. Thus $\Phi(x)=x$. Next, noting that $\Phi$ induces an automorphism $\overline{\Phi}$ of $\C[x,y,z,t]/(x)\cong\C[y,z,t]$ with $\overline{\Phi}(z(zt^2+1))=\lambda z(zt^2+1)$, we deduce that there exists $\alpha\in\C^*$ such that $\overline{\Phi}(z)=\alpha z$. By comparing with $\tilde{\varphi}(z)$, we find that $\alpha=1$. Thus  $\overline{\Phi}(zt^2+1)=z\overline{\Phi}(t)^2+1=\lambda(zt^2+1)$ and by considering the constant terms in the last equality, we conclude that in fact $\lambda=1$. Also, since $\overline{\varphi}(t)\equiv t \mod (x)$, we have that $\overline{\Phi}(t)\equiv t \mod (x)$. Thus,  $\Phi$ is congruent to the identity modulo $(x)$.

So $\Phi$ actually induces a $\C(u)[x]$-automorphism of $A$ and the observation made on the Makar-Limanov and Derksen invariants of $A$ implies that the induced automorphism preserves the subring $\C(u)[x,z,t]$ of $A$ and fixes the ideal $(x^2,r_0-u)$. This implies that there exists $H\in \C[u][z,t]$ such that $\Phi(f)\equiv f+xJac(H,f)$ mod $x^2$ for every $f\in \C(u)[z,t]$. Furthermore, since $\Phi$ is an extension of $\tilde{\varphi}$, the residue class of $H$ in $\C[u][z,t]/(u)\simeq \C[z,t]$ coincides with $h+c$ for some $c\in \C$. But on the other hand, one has $Jac(H,r_0-u)\in (r_0-u)$ as the restriction of $\Phi$ fixes the ideal $(x^2,r_0-u)$. Since $r_0-u$ is irreducible, this implies that $H$ is, up to the addition of a polynomial in $\C[u]$, an element of the ideal $(r_0-u)\C[u][z,t]$ and so its image in $\C[u][z,t]/(u)\simeq \C[z,t]$ is a regular function on $\ba^2$ whose restriction to the curve $C_0=\{r_0=0\}$ is constant. This is absurd since by construction $h+c=zt^2+c$ is locally constant but not constant on $C_0$.
\end{proof}

\begin{remark} Note that there are many examples of non-extendible automorphisms of hypersurfaces. In this setting, for example,
we showed in \cite{d-mj-p} that the  hypersurface in $\ba^4$ given by the equation $$x^2y+z^2+t^3+x(1+x+z^2+t^3)=0,$$ which is in fact isomorphic to the Russell cubic as a variety, admits automorphisms that do not extend to the ambient space. However, none of these non-extendible automorphisms  fixes $x$. The present example exhibits a new phenomenon coming from the fact that the curve defined by $r_0=0$ is not connected.
Note also that viewing $X$ as a subvariety of $\ba^1\times \ba^3=\mathrm{Spec}(\C[x][y,z,t])$ the restriction of the automorphism determined by $\tilde{\varphi}$ to every  fiber $X_s$, $s\in \ba^1$ of the first projection $\mathrm{pr}_x:X\rightarrow \ba^1$ actually extend to an automorphism of $\ba^3_s$: indeed, $\tilde{\varphi}$ restricts to the identity modulo $x$ and therefore the restriction of the corresponding automorphism to $X_0$ extends. On the other hand, the argument used in the proof of Lemma \ref{rem} shows immediately that the induced automorphism of $X\mid_{\ba^1\setminus\{0\}}$ extends to an automorphism of $\ba^1\setminus\{0\}\times \ba^3$.
\end{remark}


\begin{thebibliography}{99}

\bibitem{AEH} S. Abhyankar, P. Eakin, and W. Heinzer, \textit{On the uniqueness of the coefficient ring in a polynomial ring}, J. Algebra \textbf{23} (1972), 310--342

\bibitem{d-mj-p} A. Dubouloz, L. Moser-Jauslin, and P.-M. Poloni, \textit{Inequivalent embeddings of the {K}oras-{R}ussell cubic threefold,},  Michigan Math. J. Volume 59, Issue 3 (2010), 679--694.

\bibitem{d-mj-p2} A. Dubouloz, L. Moser-Jauslin, and P.-M. Poloni,   \textit{Non cancellation for smooth contractible affine threefolds}, Proc. Amer. Math. Soc. 139 (2011), 4273--4284.

\bibitem{Har} R. Hartshorne, \textit{Deformation Theory}, GTM 257, Springer, 2010.

\bibitem{Kal} S. Kaliman, \textit{Polynomials with general $C^2$-fibers are variables}, Pacific J. of Math., \textbf{203}. no. 1, (2002), 161--190

\bibitem{K-K-ML-R}  S. Kaliman, M. Koras, L.  Makar-Limanov, P.  Russell, \textit{{$\C\sp *$}-actions on {$\C\sp 3$} are linearizable}, Electron. Res. Announc. Amer. Math. Soc., \textbf{3} (1997), 63--71

\bibitem{K-ML1997}  S. Kaliman, L.  Makar-Limanov, \textit{On the {R}ussell-{K}oras contractible threefolds}, J. Algebraic Geom., \textbf{6} no. 2 (1997), 247--268

\bibitem{K-ML2} S. Kaliman; L. Makar-Limanov, \emph{AK-invariant of affine domains}, In Affine Algebraic Geometry, Osaka Univ. Press, Osaka, 2007, 231--255.

\bibitem{K-Z99} S. Kaliman; M. Zaidenberg, \emph{Affine modifications and affine varieties with a very transitive automorphism group}, Transform. Groups 4 (1999), 53--95.

\bibitem{K-R}    M. Koras and P. Russell, \textit{{${\bf C}\sp *$}-actions on {${\bf C}\sp 3$}: the smooth locus of the quotient is not of hyperbolic type}, J. Algebraic Geom., \textbf{8} no. 4, (1999) 603--694.

\bibitem{ML} L. Makar-Limanov, \textit{On the hypersurface {$x+x\sp 2y+z\sp 2+t\sp 3=0$} in {${\C}\sp 4$} or a {${\C}\sp 3$}-like threefold which is not {${\C}\sp 3$}}, Israel J. Math., \textbf{96}, (1996), 419--429.


\bibitem{mj09} L. Moser-Jauslin, \emph{Automorphism groups of Koras-Russell threefolds of the first kind}, Proceedings of "Affine Algebraic Geometry : A conference in Honour of Peter Russell", Montreal 1-5 june, 2009 , CRM Proceedings Lecture Notes, Vol. 54 (2011), 261--270.

\bibitem{Nag} M. Nagata, \emph{A remark on the unique factorization theorem}, J. Math. Soc. Japan, Vo. 9, No. 1 (1957), 143--145.

\bibitem{Sath} A. Sathaye, \emph{ Polynomial ring in two variables over a D.V.R.: a criterion}, Invent. Math., 74 (1983), 159--168.

\bibitem{EMV}A. van den Essen, S. Maubach, and S.V\'en\'ereau,\emph { The special automorphism group of $R[t]/(t^m)[x_1 \ldots,x_n ]$ and coordinates of a subring of $R[t][x_1 ,\ldots, x_n ]$} J. Pure Appl. Algebra, 210(1) (2007), 141--146.

\end{thebibliography}
\end{document}